\documentclass[11pt]{article}

\usepackage[margin=1.08in]{geometry}
\usepackage{amsmath,amssymb,amsthm,mathtools}
\usepackage{array,booktabs,tabularx}
\usepackage{microtype}
\usepackage{enumitem}
\usepackage{natbib}
\usepackage{xcolor}
\usepackage{comment}
\usepackage[colorlinks=true,allcolors=blue]{hyperref}
\usepackage{setspace}

\newtheorem{theorem}{Theorem}

\newtheorem{lemma}[theorem]{Lemma}

\theoremstyle{remark}

\newcommand{\bbP}{\mathbb P}
\newcommand{\cind}{\stackrel{d}{\longrightarrow}}
\newcommand{\Dirichlet}{\operatorname{Dirichlet}}

\newcommand{\Reals}{\mathbb R}

\newcommand{\DP}{\operatorname{DP}}
\newcommand{\Gam}{\operatorname{Gam}}

\newcommand{\Normal}{\operatorname{Normal}}
\newcommand{\E}{\mathbb{E}}
\newcommand{\iid}{\mathrel{\overset{\mathrm{iid}}{\sim}}}
\newcommand{\indep}{\stackrel{\text{indep}}{\sim}}

\newcommand{\sP}{\mathcal P}
\newcommand{\Var}{\operatorname{Var}}

\title{A Simple Example of Bayesian Nonparametric Inconsistency}
\author{Antonio R. Linero}
\date{}

\begin{document}
\maketitle

\begin{abstract}
  I present a very simple example in which a full-support prior over distribution functions has an inconsistent posterior, which I believe has instructive value. The example is $Y_i \iid P_0$ under the Bayesian hierarchical model $[Y_i \mid P] \iid P$ and $P \sim \int \DP(\alpha, H) \, \pi_\alpha(\alpha) \ d\alpha$ when $\pi_\alpha(\cdot)$ has exponential (or heavier) tails, where $\DP(\alpha, H)$ denotes a Dirichlet process with concentration parameter $\alpha$ and mean $H(\cdot)$; on the other hand, consistency is obtained under a light-tailed prior. This unifies and generalizes a consistency result of \citet{freedman1983inconsistent} with an inconsistency result described by \citet{ferguson1992bayesian}.
\end{abstract}

\doublespacing

\section{Introduction}

That one cannot be completely reckless with prior specification in Bayesian nonparametric problems is well known. The purpose of this note is to provide an accessible demonstration of posterior inconsistency despite the prior having full support (in the weak topology). I believe this example may be useful as a pedagogical tool because (i) it is very elementary and easy to see why things go wrong, but (ii) it is a mistake that one can imagine someone actually making. Indeed, I have actually seen this example occur in the wild as a reviewer! Because of this, I also hope this work may also prevent similar mistakes from occurring in the future.

The example is a mixture of Dirichlet process priors \citep{Antoniak1974} of the form
\begin{align}
  \label{eq:model}
  \alpha \sim \pi_\alpha,
  \quad [P \mid \alpha] \sim \DP(\alpha, H),
  \quad [Y_1, Y_2, \ldots \mid P] \iid P.
\end{align}
Let $P_0$ be any continuous distribution such that, in reality, $Y_i \iid P_0$, and assume that $H \ne P_0$. I will show that the posterior of $P$ is inconsistent as $N \to \infty$ when $\pi_\alpha(\cdot)$ has exponential (or heavier) tails. For example, if $\alpha \sim \Gam(a_\alpha, b_\alpha)$ then the posterior converges to a point mass at $\alpha^\star P_0 + (1 - \alpha^\star) H$ for some $0 < \alpha^\star < 1$; if, on the other hand, $\alpha$ has a half-Cauchy prior then the posterior converges to a point mass at $H(\cdot)$. Posterior consistency is also possible under a light-tailed prior, such as when $\alpha$ has a half-normal prior. We prove the following result:

\begin{theorem}
  \label{thm:main}
  Let $Q_t = \frac{t}{1 + t} H + \frac{1}{1 + t} \, P_0$ and let $\rho(\cdot, \cdot)$ metrize convergence in distribution. Define $\Pr(\cdot \mid Y_{1:N})$ to be the posterior of $P$ under \eqref{eq:model}. Under the setting described in this paper, for every $\epsilon > 0$ we have $\Pr\{\rho(P, Q_t) > \epsilon \mid Y_{1:N}\} \to 0$ with $P_0$-probability $1$, where $t = 0$ when $\pi_\alpha(\alpha)$ has light tails, $t$ is given in Theorem~\ref{thm:alpha} when $\pi_\alpha(\alpha)$ has exponential tails, and $t = \infty$ when $\pi_\alpha(\alpha)$ has heavy tails.
\end{theorem}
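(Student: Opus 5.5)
Conditional on $\alpha$ and $Y_{1:N}$, the posterior of $P$ is $\DP(\alpha + N, \frac{\alpha}{\alpha+N} H + \frac{N}{\alpha+N}\mathbb{P}_N)$, where $\mathbb{P}_N$ is the empirical distribution. I will therefore reduce the theorem to two facts. The first concerns the conditional posterior of $P$ given $\alpha$ and the data. The second concerns the marginal posterior of $\alpha/N$, which is the content of Theorem~\ref{thm:alpha} in the exponential-tail case, together with its light- and heavy-tailed analogues. Writing $\alpha = tN$, the conditional posterior mean is
\begin{align*}
  \frac{t}{1+t} H + \frac{1}{1+t}\mathbb{P}_N ,
\end{align*}
and its concentration parameter is $N(1+t) \ge N$.

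\textbf{Step 1 (conditional concentration, uniform in $t$).} I will show that $\sup_{t \in [0,\infty)} \Pr\{\rho(P, Q_t) > \epsilon/2 \mid \alpha = tN, Y_{1:N}\} \to 0$ almost surely. For a $\DP(M, G)$ and a bounded function $f$, we have $\Var(\int f\,dP) \le \|f\|_\infty^2/(M+1)$. So with $M \ge N$ every such variance is $O(1/N)$, uniformly in $t$. Since $\rho$ can be taken to be a metric built from countably many bounded Lipschitz (or indicator-of-interval) functions, e.g.\ $\rho(P,Q) = \sum_k 2^{-k}|\int f_k\,dP - \int f_k\,dQ| \wedge 1$, Chebyshev applied to finitely many $f_k$ plus a tail truncation gives concentration around the conditional mean. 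Glivenko--Cantelli gives $\mathbb{P}_N \to P_0$ almost surely, and $|\int f\,d\text{mean} - \int f\,dQ_t| \le \frac{1}{1+t}|\int f\,d\mathbb{P}_N - \int f\,dP_0|$, so the conditional mean is close to $Q_t$ uniformly in $t$. In addition, $t \mapsto Q_t$ is Lipschitz in $\rho$ on $[0,\infty]$ under the reparametrization $s = t/(1+t)$.

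\textbf{Step 2 (posterior of $s = \alpha/(\alpha+N)$).} The marginal likelihood of $\alpha$ for $N$ distinct observations (continuity of $P_0$ makes the $Y_i$ distinct almost surely, and $H$-atoms do not matter when $H$ is diffuse) is proportional to $\alpha^N \Gamma(\alpha)/\Gamma(\alpha+N)$, times a factor free of $\alpha$. With $\alpha = tN$, Stirling gives
\begin{align*}
  \log L = -N\,g(t) + O(\log N), \qquad g(t) = (1+t)\log(1+t) - t\log t ,
\end{align*}
which is increasing in $t$ with $g(t) \sim \log t$ as $t\to\infty$. The likelihood thus favors $t \to \infty$ (i.e.\ $\alpha \gg N$) only subexponentially in $N$, and this is where the tail of $\pi_\alpha$ enters through $\log \pi_\alpha(tN)$. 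Under light tails, $-\log\pi_\alpha(tN)$ grows superlinearly in $N$ and pins $t \to 0$. Under exponential tails $\pi_\alpha(\alpha)\approx e^{-b\alpha}$, the exponent is $N[\,\cdots - bt\,]$ with a unique interior maximizer $t^\star$, so Laplace's method concentrates $t$ at $t^\star$; this is Theorem~\ref{thm:alpha}, which I assume. Under heavy tails the penalty is $o(N)$ and the likelihood pushes mass to $t = \infty$. In each case I conclude that $\Pr(|s - s^\star| > \delta \mid Y_{1:N}) \to 0$.

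\textbf{Step 3 (combine).} I will split $\Pr\{\rho(P,Q_{t^\star})>\epsilon \mid Y_{1:N}\}$ on the event $|s-s^\star|\le\delta$, choosing $\delta$ small enough that $\rho(Q_t,Q_{t^\star})<\epsilon/2$ there. On that event the Step 1 uniform bound applies; off it the Step 2 bound applies. Both terms vanish almost surely.

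The main obstacle is Step 2 for the boundary cases. I have to control the normalizing integral carefully, including the contribution from small $\alpha$ (where $\alpha^N$ kills mass) and, in the heavy-tail case, show the mass escapes to $t=\infty$ rather than to a finite $t$. I would handle this by bounding the integrand over $[0,\delta N]$ and $[KN,\infty)$ separately against a lower bound obtained on a neighbourhood of the claimed limit.
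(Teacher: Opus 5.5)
Your architecture is the paper's. You concentrate the conditional $\DP(\alpha+N,\cdot)$ around its center using $\Var(Pf)\le\|f\|_\infty^2/(\alpha+N+1)$, move the center to $Q_t$ via Glivenko--Cantelli and continuity in the weight $t/(1+t)$, and finish with concentration of the posterior of $\alpha/N$. Steps 1 and 3 are correct and differ only cosmetically from the paper. The paper bounds $\E\{\rho(P,Q_{N,\alpha})\mid\alpha,Y_{1:N}\}\le(N+\alpha+1)^{-1/2}$ via Cauchy--Schwarz, applies Markov, and uses $\rho(Q_{N,\alpha},Q_t)\le w(t)\rho(\bbP_N,P_0)+2|w(\alpha/N)-w(t)|$ with $w(t)=(1+t)^{-1}$. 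This plays the role of your uniform-in-$t$ Chebyshev bound and your split on $|s-s^\star|\le\delta$.

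Step 2, however, is wrong as written. In $\log L_N(Nt)=-Ng(t)+O(\log N)$ with $g(t)=(1+t)\log(1+t)-t\log t$, you have lost the term $N\log t$ coming from $\alpha^N=(tN)^N$, and that term is not free of $\alpha$. The correct expansion (Stirling; cf.\ Lemma~\ref{lem:likelihood}) is $\log L_N(Nt)=-NI(t)+O(\log N)$ with $I(t)=(1+t)\log(1+1/t)-1=g(t)-\log t-1$. This $I$ is \emph{decreasing}, grows like $\log(1/t)$ as $t\downarrow0$, and decays like $1/(2t)$ as $t\to\infty$. With your increasing $g$ the likelihood would favour $t\to0$. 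Under (E) the exponent $-N\{g(t)+ct\}$ would then have no interior maximizer, and under (H) nothing would push mass to infinity---the opposite of what you assert.

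Nor does the likelihood favour large $t$ only subexponentially. For fixed $t<t'$, $L_N(Nt')/L_N(Nt)=\exp[N\{I(t)-I(t')\}+O(\log N)]$. It is precisely this exponential gain that overwhelms an $o(N)$ prior penalty under (H), and that balances the linear penalty $cNt$ under (E) to give the minimizer $t_c$ of $I(t)+ct$.

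The repair is easy: Theorem~\ref{thm:alpha} covers (L), (E) \emph{and} (H), not just (E), so Step 2 is a citation, exactly as in the paper's proof. If you do want to re-derive the light-tail case, note that pointwise superlinearity of $V$ does not by itself control the prior mass above $\epsilon N$ relative to the prior mass near $\delta N$. That is why (L) includes eventual log-concavity, which the paper uses through a secant-slope bound.
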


See Section~\ref{sec:formal} for the formal definitions of light, exponential, and heavy tails. Examples of posterior consistency of this form are important because they inoculate users from the naive expectation that a prior having ``large support'' will imply posterior consistency. The posterior here is inconsistent despite the fact that the prior has ``full support'' when $H(\cdot)$ has full support on $\Reals^d$, in the sense that $\Pr\{\rho(P, P_0) < \epsilon\} > 0$ for all $\epsilon > 0$ and all $P_0$, i.e., draws of $P$ from the prior can be ``arbitrarily close'' to $P_0$.

\subsection{Comments}

\citet{DiaconisFreedman1986a,DiaconisFreedman1986b} provide the most well-known examples of posterior inconsistency. The result in this paper should be intuitive to experts, and it is, at the very least, heavily hinted at by prior work. I am mostly surprised that it is not more \emph{well-known}, as most people I have mentioned this example to were not aware that such a simple example exists. Rather, most people seem to know about the general findings of \citet{DiaconisFreedman1986a,DiaconisFreedman1986b}, and also understand that proving posterior consistency is non-trivial with respect to stronger metrics, but do not know either the details or the proofs supporting the examples; based on this work, it is natural to assume (incorrectly) that posterior inconsistency only occurs in highly contrived or pathological cases.

\citet{ferguson1992bayesian} essentially describe a special case of Theorem~\ref{thm:main} (use the prior $P \sim 0.5 \DP(\alpha, H) + 0.5 \delta_H$) and explain briefly why the posterior is inconsistent --- the argument is very short, but the prior may appear contrived. \citet{freedman1983inconsistent} show that consistency is obtained under a bounded prior with $\Pr_{\pi_{\alpha}}(\alpha > M) = 0$ for some finite $M > 0$; their argument is for discrete data, but state that it is straight forward to extend the result to continuous data. \citet{ghosal1999consistency} also provide substantial discussion of the main points raised in this example. I am unaware of any results for unbounded-but-finite priors analogous to Theorem~\ref{thm:main}, and in particular am unaware of prior work that obtains convergence to a mixture when $\alpha$ has exponential tails.


On the positive side, sufficient conditions that guarantee posterior consistency are well-known. If we instead demand that $P_0$ lie in the \emph{Kullback-Leibler support} of the prior, i.e., $\Pr\{\operatorname{KL}(P_0 \| P) < \epsilon\} > 0$ for all $\epsilon > 0$ where $\operatorname{KL}(P_0 \| P) = \int \log (\frac{dP_0}{dP}) dP_0$, then a classic result {due to \citet{schwartz1965bayes} states that $\Pr\{\rho(P, P_0) > \epsilon \mid Y_{1:N}\} \to 0$. This result is not applicable here because $dP_0 / dP$ almost surely does not exist ($P$ is discrete but $P_0$ is continuous).

\subsection{Where This Might Occur in Practice}

The setting in which I saw this issue arise was one in which the analyst wanted to ``express ignorance'' about the weight assigned to a prior guess. The setting was more complicated than \eqref{eq:model}. Rather than having a single $P$, the problem concerned many distributions $P_1, P_2, \ldots, P_K$ all sharing a $\DP(\alpha, H_\theta)$ prior, where $\{H_\theta : \theta \in \Theta\}$ is a parametric family of models.

Distilling this idea, consider instead the hierarchical model
\begin{align*}
  (\alpha, \theta) \sim \pi_{\alpha,\theta}(\alpha, \theta), \qquad
  [P_1, \ldots, P_K \mid \alpha, \theta] \iid \DP(\alpha, H_\theta), \qquad
  [Y_{ik} \mid \alpha, \theta, \{P_k\}_{k = 1}^K] \indep P_k.
\end{align*}
When the $P_k$'s are the target of inference, the conditional posterior is given by
\begin{align*}
  P_k \sim \DP\left( \alpha + N_k , \frac{\alpha}{\alpha + N_k} H_\theta + \frac{N_k}{\alpha + N_k} \, \bbP_k\right),
  \quad \text{where} \quad
  \bbP_k = \frac{1}{N_k} \sum_{i = 1}^{N_k} \delta_{Y_{ik}}.
\end{align*}
Learning $\alpha$ and $\theta$ together would then be very useful, because it would allow us to learn both the target $H_\theta$ toward which to shrink the distributions and the value of $\alpha$ that controls the amount of shrinkage applied. This is a very natural generalization of what is done in hierarchical models such as $Y_{ij} \indep \Normal(\mu_j, \sigma^2_j)$ and $\mu_j \sim \Normal(m, \sigma^2_\mu)$; learning $m$ lets us learn where to shrink the $\mu_j$'s, while learning $\sigma^2_\mu$ lets us learn how much shrinkage to apply \citep{gelman2007data}.

This type of data-adaptive shrinkage has been shown to be extremely useful!
Assuming that group-level densities are not of interest, it is also much more pragmatic to use a raw Dirichlet process rather than something based on a Dirichlet process \emph{mixture} \citep{EscobarWest1995}, which would not have this inconsistency problem. Dirichlet process mixtures and other advanced tools like the hierarchical Dirichlet process \citep[HDP,][]{teh2006hierarchical} and the nested Dirichlet process \citep[NDP,][]{rodriguez2008nested} introduce many additional hyperparameters and are not trivial to fit reliably.

The lesson is not that it is misguided to learn $\alpha$ empirically. It is just that you cannot do it with Bayesian reasoning (which is fair, considering that the prior is incongruous with continuous data). One possibility is to choose $\alpha$ to reduce some measure of out-of-sample risk. For example, rather than specifying a prior, we could choose $\alpha$ to minimize a held-out continuous ranked probability score (CRPS) \citep{GneitingRaftery2007} as
\begin{align*}
  (\widehat \alpha, \widehat \theta)
  =
  \underset{\alpha>0, \theta}{\arg\min}
  \sum_{k=1}^{K}\sum_{i=1}^{N_k}
  \int_{\mathbb R}
  \left[
  \frac{\alpha H_\theta(y)+\sum_{j\ne i} 1(Y_{jk}\le y)}
  {\alpha+N_k-1}
  - 1(Y_{ik}\le y)
  \right]^2 \, dy.
\end{align*}
This would instead tune $\alpha$ for recovering the best predictive distribution for the observations $Y_{ik}$ as measured by CRPS.

\section{Some Background}

In this section, we define relevant terms and introduce notation. Let $P_0$ and $H$ be continuous probability distributions on $\Reals^d$ with densities $p_0(\cdot)$ and $h(\cdot)$, such that $h(Y_i) > 0$ with $P_0$-probability 1. Let $\sP(\Reals^d)$ denote the set of probability distributions on $\Reals^d$.

We say that a random distribution $P$ is a \emph{Dirichlet process} with mean $H(\cdot)$ and concentration parameter $\alpha$ if, for any (measurable) partition $\{A_1, \ldots, A_M\}$ of $\Reals^d$, we have $(P(A_1), \ldots, P(A_M)) \sim \Dirichlet(\alpha \, H(A_1), \ldots, \alpha \, H(A_M))$. To denote this, we write $P \sim \DP(\alpha, H)$. We refer to $H(\cdot)$ as the mean and $\alpha$ as the concentration parameter because they relate to mean and variance via
\begin{align*}
  \E\{P(A_i)\} = H(A_i) \qquad \text{and} \qquad
  \Var\{P(A_i)\} = \frac{H(A_i) \{1 - H(A_i)\}}{\alpha + 1},
\end{align*}
where $\E(\cdot)$ and $\Var(\cdot)$ are the expectation and variance operators, respectively.

The Dirichlet process has the desirable property of being a conjugate prior for iid sampling: if $[Y_1, \ldots, Y_N \mid P] \sim P$ and $P \sim \DP(\alpha, H)$, then the posterior is the Dirichlet process
\begin{align*}
  [P \mid Y_1, \ldots, Y_N] \sim \DP\left( \alpha + N, \frac{\alpha}{\alpha + N} \, H + \frac{N}{\alpha + N} \, \bbP_N \right)
  \quad \text{where} \quad
  \bbP_N = \frac{1}{N} \sum_{i = 1}^N \delta_{Y_i}(\cdot).
\end{align*}
Here, $\alpha$ functions as a ``prior number of observations'' that belief in the mean is worth, and that the update consists of a weighted combination of our prior expectation ($P \approx H$) and the maximum likelihood estimator $(P \approx \bbP_N)$, which is analogous to many other Bayes estimates under conjugate priors.

Another desirable property of the Dirichlet process is that it has large support in the following sense. Let $\rho(P, Q)$ metrize convergence in distribution on $\sP(\Reals^d)$, i.e., $\rho(P_N, Q) \to 0$ implies $P_N \cind Q$, and suppose $H$ has full support on $\Reals^d$. Then $\Pr\{d(P, Q) < \epsilon\} > 0$ for all $\epsilon$ and $Q \in \sP(\Reals^d)$. That is, samples from the Dirichlet process can be arbitrarily close to any distribution on $\Reals^d$ \citep{Majumdar1992}. 

\citet{Ferguson1973} introduced the Dirichlet process and showed that it is almost surely discrete.
While every (say) continuous distribution can be in the weak support of the Dirichlet process, draws from the Dirichlet process itself are never continuous. This turns out to be quite important, as this implies that draws $[Y_1, Y_2, \ldots, Y_N \mid P] \iid P$ are likely to include \emph{ties}. When $H(\cdot)$ is continuous and $\alpha \sim \pi_\alpha$, the posterior of density $\alpha$ is given by
\begin{align}
  \label{eq:urn}
  \pi(\alpha \mid Y_{1:N}) \propto \frac{\pi_\alpha(\alpha) \alpha^k \, \Gamma(\alpha)}{\Gamma(\alpha + N)}
\end{align}
where $k$ is the number of distinct values among $Y_{1:N}$ \citep[see, e.g.,][]{EscobarWest1995}.

\section{The High Level Argument}
\label{sec:high}

The mixture of Dirichlet processes (MDP) \citep{Antoniak1974} leads to the posterior distribution for $P$ of the form
\begin{align}
  \label{eq:mdp}
  \int \DP\left(dP \mid \alpha + N, \frac{\alpha}{\alpha + N} H + \frac{N}{\alpha + N} \bbP_N \right) \, \pi(\alpha \mid Y_{1:N}),
\end{align}
where $\pi(\alpha \mid Y_{1:N})$ is the marginal posterior distribution of $\alpha$. 
For continuous data, by \eqref{eq:urn}, we have almost surely that $K_N = N$, so the posterior density of $\alpha$ is $\pi(\alpha \mid Y_{1:N}) \propto \pi_\alpha(\alpha) \frac{\alpha^N \Gamma(\alpha)}{\Gamma(\alpha + N)}$. 
The likelihood contribution for $\alpha$ is
\begin{align}
  \label{eq:sumlog}
  \log L_N(\alpha)
  = \sum_{j = 0}^{N - 1}  \log \frac{\alpha}{\alpha + j}
\end{align}
This expression is increasing in $\alpha$ as, under the model, observing no ties is evidence in favor of a large concentration parameter.

A simple way to see how the choice of prior determines the posterior is to consider the maximum a posteriori estimate of $T_N=\alpha/N$. Let $V(\alpha)=-\log\pi_\alpha(\alpha)$ and, up to a constant that does not depend on $t$, write the log-posterior density of $T_N$ and the normalized score as
\begin{align*}
  \ell_N(t)=\log L_N(Nt)-V(Nt) \quad \text{and} \quad s_N(t)
  = \left . \frac{\partial \log L_N(\alpha)}{\partial \alpha}\right|_{\alpha = Nt}
  =\frac{1}{N}\sum_{j=1}^{N-1}\frac{j}{t(Nt+j)},
\end{align*}
and note that $s_N(t) \to s(t) = \frac{1}{t}-\log\left(1+\frac{1}{t}\right)$ as $N \to \infty$ for every fixed $t > 0$ by a Riemann-sum argument.  We now examine the score equation $s_N(t)=V'(Nt)$ directly for three familiar priors.

\paragraph{Exponential prior.}

If $\pi_\alpha(\alpha)=c\exp(-c\alpha)$, then $V'(\alpha)=c$. The limiting score equation is therefore
\begin{align}
  \label{eq:tc}
  \frac{1}{t} - \log\left(1 + \frac{1}{t}\right) = c.
\end{align}
Because the left-hand side decreases continuously from infinity to zero, there is a unique solution $t_c \in(0,\infty)$, and so under mild conditions we should expect that the MAP satisfies $\widehat T_N\to t_c$.

\paragraph{Half-normal prior.}

If $\pi_\alpha(\alpha)\propto\exp\{-\alpha^2/(2\sigma^2)\}$ for $\alpha>0$, then $V'(Nt)=Nt/\sigma^2$. At every fixed $t>0$ this term diverges, so the solution of the score equation must tend to zero. Putting $t=c/\sqrt{N}$ gives
\begin{math}
  \frac{s_N(c/\sqrt{N})}{\sqrt{N}}\longrightarrow\frac{1}{c},
  \text{ and } 
  \frac{V'(c\sqrt{N})}{\sqrt{N}}\longrightarrow\frac{c}{\sigma^2}.
\end{math}
Equating the limits gives $c=\sigma$, so $\widehat T_N\sim\sigma/\sqrt{N}\to0$.

\paragraph{Cauchy prior.}

Take a Cauchy density with scale $\tau$, restricted to $\alpha>0$, so that $\pi_\alpha(\alpha)\propto\{1+(\alpha/\tau)^2\}^{-1}$. Then
\begin{math}
  V'(Nt)=\frac{2Nt}{\tau^2+N^2t^2}\longrightarrow0
\end{math}
at every fixed $t>0$. Since $s(t)>0$, the solution of the score equation must tend to infinity. In fact, putting $t=cN$ gives
\begin{math}
  N^2s_N(cN)\longrightarrow\frac{1}{2c^2},
  \text{ and }
  N^2V'(cN^2)\longrightarrow\frac{2}{c}.
\end{math}
Equating the limits gives $c=1/4$, so $\widehat T_N\sim N/4\to\infty$. 

\bigskip

Thus the three MAP estimates tend to a positive constant, zero, and infinity,
respectively.
Assuming this concentration, the three outcomes for $P$ follow from the center of the conditional Dirichlet-process posterior,
\begin{align*}
  [P \mid \alpha, Y_{1:N}] \sim \DP\left(N + N T_N, \frac{T_N}{1+T_N}H + \frac{1}{1+T_N}\bbP_N\right).
\end{align*}
Because $\bbP_N$ converges weakly to $P_0$, the limiting center is $(P_0+t_c H)/(1+t_c)$ under the exponential prior, $P_0$ under the half-normal prior, and $H$ under the Cauchy prior. The concentration of the conditional Dirichlet process is $N(1+T_N)$, so its fluctuations around this center also vanish. Thus the exponential prior produces an inconsistent nontrivial mixture, the half-normal prior produces weak posterior consistency, and the Cauchy prior causes the posterior to concentrate at $H$.

\paragraph{What goes wrong}

The fundamental problem with attempting to learn $\alpha$ from the data is that the fact that there are no ties in the data provides evidence that $\alpha$ is large, which induces more shrinkage towards $H$. Indeed, the value of $\alpha$ that is most consistent with observing no ties is $\alpha \to \infty$, as this would make $\Pr(K_N = N \mid \alpha) \to 1$. The role played by the prior is therefore only to express a preference for some finite values of $\alpha$. The likelihood prevents taking $\alpha$ too small, and if the tails of $\pi_\alpha$ are too heavy then the prior expresses a preference for $\alpha \asymp N$ or $\alpha \gg N$. For thin tails, the prior prefers $\alpha \ll N$ so that, while the posterior concentrates on increasingly large values of $\alpha$, these values are small enough that $\bbP_N$ receives all of the mass asymptotically.

\paragraph{Other Inconsistencies}

Posterior inconsistency with a prior on $\alpha$ does \emph{not} occur if we replace the mixture of Dirichlet processes with a \emph{Dirichlet process mixture} of the form
\begin{align*}
  \alpha \sim \pi_\alpha,
  \qquad
  [P \mid \alpha] \sim \DP(\alpha, H),
  \qquad 
  [Y_1, Y_2, \ldots \mid P] \iid f(y \mid P) = \int \kappa(y; \theta) \ P(d\theta),
\end{align*}
for some density in a parametric family $\kappa(\cdot; \theta)$. Conditions for posterior consistency with respect to both weak convergence and total variation for Dirichlet process mixtures are given, for example, by \citet{GhosalGhoshRamamoorthi1999}. Placing a prior $\alpha \sim \pi_\alpha(\alpha)$ is also routine in these settings \citep{EscobarWest1995}. Interestingly, and completely separately from the source of inconsistency in this work, it is still possible to go wrong here by placing an \emph{improper} prior on $\alpha$. Taking $\pi_\alpha(\alpha) = 1(\alpha > 0)$, for example, leads to an improper posterior \citep{vicentini2025prior}. Fortunately, we are not aware of anyone falling into this particular trap.

\section{Formal Arguments}
\label{sec:formal}

The argument in Section~\ref{sec:high} is heuristic, so for the sake of completeness I provide details below. To show that the posterior assigns negligible mass to a set $A_N$, we control
\begin{align*}
  \pi_\alpha(\alpha \mid Y_{1:N})
  =
  \frac{L_N(\alpha) \, \pi_\alpha(\alpha)}{\int L_N(a) \, \pi_\alpha(a) \ da}
  =
  \frac{L_N(\alpha) \, \pi_\alpha(\alpha)}{Z_N}
\end{align*}
by (i) lower-bounding the denominator and (ii) upper-bounding the numerator on the set $A_N$.

First, we need an asymptotic approximation to the likelihood contribution. Define
\begin{align}
  \label{eq:I}
  I(t) = \int_0^1 \log\left( 1 + \frac x t \right) \ dx
  = (1 + t) \log \left( 1 + \frac 1 t \right) - 1.
\end{align}
The function $I(\cdot)$ is strictly decreasing and convex, $I(t) \to \infty$ as $t \downarrow 0$, and $I(t) \to 0$ as $t \to \infty$.

\begin{lemma}[Likelihood asymptotics]\label{lem:likelihood}
  For every $N \ge 2$, $L_N$ is increasing on $(0,\infty)$. Moreover, for every $0<r<R<\infty$,
\begin{align*}
  \sup_{t\in[r,R]}
  \left|\frac 1 N\log L_N(Nt)+I(t)\right| \longrightarrow 0,
\end{align*}
and, for all $a > 0$,
\begin{math}
  0\le -\log L_N(a)\le \frac{N(N-1)}{2a}.
\end{math}
\end{lemma}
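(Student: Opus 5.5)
The plan is to work directly from the representation \eqref{eq:sumlog}, $\log L_N(a) = \sum_{j=0}^{N-1} \log\{a/(a+j)\} = -\sum_{j=1}^{N-1} \log(1 + j/a)$, where the $j=0$ term vanishes. I will prove the three claims in the order monotonicity, crude bound, uniform asymptotics.

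\textbf{Monotonicity.} Each summand $-\log(1 + j/a)$ with $j \ge 1$ is strictly increasing in $a$. Since $N \ge 2$ guarantees at least one such term, $L_N$ is (strictly) increasing on $(0,\infty)$.

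\textbf{Crude bound.} Every summand is nonpositive, which gives $-\log L_N(a) \ge 0$. For the upper bound I use $\log(1+x) \le x$, so that
\begin{math}
  -\log L_N(a) \le \sum_{j=1}^{N-1} j/a = N(N-1)/(2a).
\end{math}

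\textbf{Uniform asymptotics.} Substituting $a = Nt$ gives
\begin{math}
  -\frac{1}{N}\log L_N(Nt) = \frac{1}{N}\sum_{j=1}^{N-1} f_t(j/N),
\end{math}
where $f_t(x) = \log(1 + x/t)$. This is a left Riemann sum for $\int_0^1 f_t(x)\,dx = I(t)$, with the $j=0$ term equal to $f_t(0)=0$ and so harmless. Because $f_t$ is increasing on $[0,1]$, the left Riemann sum lies between $\int_0^1 f_t - \frac1N\{f_t(1)-f_t(0)\}$ and $\int_0^1 f_t$. Hence
\begin{align*}
  0 \le I(t) + \frac1N \log L_N(Nt) \le \frac{1}{N}\log\left(1 + \frac1t\right) \le \frac{1}{N}\log\left(1+\frac1r\right)
\end{align*}
for all $t \in [r,R]$, and the right-hand side tends to zero. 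I would also verify the closed form in \eqref{eq:I} by substituting $u = 1 + x/t$, which gives $t\int_1^{1+1/t}\log u\,du = (1+t)\log(1+1/t) - 1$, although the lemma itself only needs the integral form.

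\textbf{Main obstacle.} The only delicate point is uniformity in $t$, and it is handled by the monotone Riemann-sum sandwich, whose error bound depends on $t$ only through $\log(1+1/t)$. That quantity is bounded once $t \ge r > 0$, so the bound is in fact uniform on $[r,\infty)$, and the upper limit $R$ is not needed.
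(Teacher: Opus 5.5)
Your proof is correct and follows the same route as the paper: monotonicity of each summand, $\log(1+x)\le x$ for the crude bound, and a Riemann-sum comparison with $I(t)$. The only difference is that you bound the Riemann-sum error using monotonicity of $x\mapsto\log(1+x/t)$, where the paper appeals to joint continuity on $[0,1]\times[r,R]$; this buys the explicit estimate $0\le I(t)+N^{-1}\log L_N(Nt)\le N^{-1}\log(1+1/r)$, uniform on all of $[r,\infty)$, and also the exact bound $L_N(Nt)\ge e^{-NI(t)}$ for every $t>0$.
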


\begin{proof}
  Monotonicity can be verified directly. With $a = Nt$, the right-hand side of \eqref{eq:sumlog}, divided by $N$, is a Riemann sum for \eqref{eq:I}; joint continuity of $(x,t) \mapsto \log(1+x/t)$ on $[0,1]\times[r,R]$ gives uniform convergence. Finally, $-\log L_N(a) \le N (N - 1) / (2a)$ follows from $\log(1 + x) \le x$.
\end{proof}

Let $\mu_N$ denote the posterior law of $T_N = \alpha / N$ given $[K_N = N]$. We consider three different tail regimes for $\pi_\alpha$. Recall that $V(\alpha) \stackrel{\text{def}}{=} -\log \pi_\alpha(\alpha)$.

\begin{description}
\item[Light Tail (L)] Either $\pi_\alpha$ has bounded support, or $\pi_\alpha(\cdot)$ is log-concave for sufficiently large $\alpha$ with $V(\alpha) / \alpha \to \infty$ as $\alpha \to \infty$.
\item[Exponential Tail (E)] There exists a $c \in (0, \infty)$ such that $V(\alpha) / \alpha \to c$ as $\alpha \to \infty$.
\item[Heavy Tail (H)] $V(\alpha) / \alpha \to 0$ as $\alpha \to \infty$.
\end{description}

\begin{theorem}
  \label{thm:alpha}
  Let $\mu_N$ be the distribution of $T_N = \alpha / N$ under the posterior $\pi_\alpha(\alpha \mid Y_{1:N}) \propto \pi_\alpha(\alpha) \frac{\alpha^N \, \Gamma(\alpha)}{\Gamma(\alpha + N)}$. Then, $T_N \to 0$ in $\mu_N$-probability under (L), $T_N \to t_c$ in $\mu_N$-probability under (E), and $T_N \to \infty$ in $\mu_N$-probability under (H).
\end{theorem}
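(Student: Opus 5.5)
The plan is to work with the density of $T_N=\alpha/N$ directly. Up to normalization it is $g_N(t)=L_N(Nt)\,\pi_\alpha(Nt)$. In each regime I will compare $\int_{A}g_N$ over a ``bad'' set $A$ with the integral over a small ``good'' interval $B$. It suffices to show that $\log\int_A g_N-\log\int_B g_N\to-\infty$. On a compact $[r,R]\subset(0,\infty)$, Lemma~\ref{lem:likelihood} gives $\frac1N\log L_N(Nt)=-I(t)+o(1)$ uniformly. Since $L_N$ is increasing, $L_N(Nt)\le L_N(NR)$ for $t\le R$ and $L_N\le 1$ everywhere. The bound $-\log L_N(a)\le N(N-1)/(2a)$ controls $L_N$ from below for large $a$.

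\textbf{(E).} Write $V(Nt)=cNt+o(N)$ uniformly on compacts, and also $V(Nt)\ge (c-\delta)Nt$ for $Nt$ large. The exponent per $N$ is then $-\{I(t)+ct\}+o(1)$. The function $t\mapsto I(t)+ct$ is strictly convex with derivative $I'(t)+c=-s(t)+c$; indeed $I'(t)=\log(1+1/t)-1/t$. So it is uniquely minimized at $t_c$ from \eqref{eq:tc}. For the lower bound on $Z_N$, integrate over $B=[t_c-\eta,t_c+\eta]$, which gives $\int_B g_N\ge \exp\{-N(\min_B(I+ct)+o(1))\}$. For the upper bound, split the bad set into three pieces.
\begin{itemize}
\item On $[r,R]\setminus(t_c-\delta,t_c+\delta)$, use the uniform asymptotics together with the strict gap in $I+ct$.
\item On $(0,r)$, use monotonicity: $L_N(Nt)\le L_N(Nr)\approx e^{-NI(r)}$. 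Since $\int_0^r\pi_\alpha(Nt)\,dt\le 1/N$, choosing $r$ small makes $I(r)$ exceed the optimum.
\item On $(R,\infty)$, bound $L_N\le1$ and $\pi_\alpha(Nt)\le e^{-(c-\delta)Nt}$, so the tail integral is at most $e^{-(c-\delta)NR}$. Choosing $R$ large beats the optimum.
\end{itemize}

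\textbf{(H).} Fix $M$; I must show $\mu_N(T_N\le M)\to0$. For the bad set, use $L_N(Nt)\le L_N(NM)=e^{-N(I(M)+o(1))}$, which gives $\int_0^M g_N\le e^{-NI(M)+o(N)}/N$. For the good set, take $B=[K,2K]$ with $K$ large. The likelihood satisfies $\log L_N(NK)\ge -N/(2K)$, and the prior satisfies $\pi_\alpha(Nt)=e^{-o(N)}$ uniformly on $B$, using $V(\alpha)/\alpha\to0$. So $\int_B g_N\ge e^{-N/(2K)-o(N)}$. Choosing $K$ with $1/(2K)<I(M)$ finishes. A subtlety is that $V$ need not be monotone, but the definition of $V/\alpha\to0$ gives a uniform $o(N)$ on $[NK,2NK]$.

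\textbf{(L).} Fix $r>0$; I must show $\mu_N(T_N>r)\to0$. With bounded support this is trivial for $N r>M$. Otherwise the good set is $B=[\epsilon_N,2\epsilon_N]$ with $\epsilon_N\to0$ chosen slowly, e.g.\ so that $N\epsilon_N\to\infty$. There the likelihood is only $e^{-NI(\epsilon_N)}$, and $I(\epsilon)\sim\log(1/\epsilon)\to\infty$, which is the main obstacle. The bad set has prior mass at most $e^{-\inf_{a>Nr}V(a)}$, and this inf is $\ge K(N r)$ for any $K$ eventually. The prior on $B$ is not small because $V(a)$ is finite at moderate $a$. Concretely, take $\epsilon_N=\epsilon$ fixed but small. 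Then the good side is at least $e^{-N(I(\epsilon)+o(1))}\cdot e^{-\sup_{[N\epsilon,2N\epsilon]}V}$, while the bad side is at most $e^{-K N r}$ with $K$ arbitrary. Log-concavity gives $V(2N\epsilon)\le$ something comparable to $V(Nr)\cdot(2\epsilon/r)$ by convexity of $V$ through a base point. More precisely, the convex $V$ with superlinear growth satisfies $V(2N\epsilon)\le \frac{2\epsilon}{r}V(Nr)+O(1)$ when $2\epsilon<r$, and the bad side is $\le e^{-V(Nr)}\cdot$ (a tail factor from convexity). Hence the ratio is at most $\exp\{NI(\epsilon)-(1-2\epsilon/r)V(Nr)+O(\log N)\}\to0$, since $V(Nr)/N\to\infty$. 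This convexity comparison is the step I expect to need the most care, which is why log-concavity is assumed.

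Finally, in all three regimes, I would state the conclusion as convergence in probability under $\mu_N$. This requires only that the complement sets have vanishing posterior mass along every $N$, which the above exponential bounds give.
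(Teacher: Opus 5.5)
Your proof is correct, and its architecture is the paper's. In each regime you bound the mass of a bad set against the integral over a good window, using only three facts from Lemma~\ref{lem:likelihood}: monotonicity of $L_N$, the uniform asymptotics $\frac1N\log L_N(Nt)\to -I(t)$ on compacts, and $-\log L_N(a)\le N(N-1)/(2a)$.

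Under (E) the two arguments coincide step for step. Both use strict convexity of $I(t)+ct$ and a neighborhood of $t_c$ for the denominator. Both then split the bad set three ways: monotonicity below $r$, uniform asymptotics on $[r,R]$, and $L_N\le 1$ with an exponential prior bound above $R$.

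Under (L) the mechanism is also the same. Convexity of $V$ makes the gap between $V(Nr)$ and $V$ on the good window superlinear in $N$, which swamps the linear likelihood cost $NI(\epsilon)$. Your interpolation bound $V(2N\epsilon)\le\frac{2\epsilon}{r}V(Nr)+O(1)$ is interchangeable with the paper's diverging secant slope between the good point and the threshold.

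The real difference is (H). The paper builds a slowly diverging $s_N$ by a diagonal argument to get the $M$-free bound $Z_N\ge e^{-o(N)}$. You instead fix $M$ first and use the fixed window $[KN,2KN]$ with $1/(2K)<I(M)$. That needs only $\log L_N(KN)\ge -N/(2K)$ and $V=o(N)$ uniformly on the window. Your version is shorter and avoids the bookkeeping with $r(x)$, $M(k)$ and $A_N$. Nothing is lost, since letting $K\to\infty$ in your bound gives $\liminf_N\frac1N\log Z_N\ge 0$, which is the paper's statement.

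When you write up (L), drop the preliminary sketch before ``Concretely''. The bound $e^{-KNr}$ on the bad side is useless by itself, because the good side carries the equally superlinear factor $e^{-\sup_{[N\epsilon,2N\epsilon]}V}$. Also, $e^{-\inf_{a>Nr}V(a)}$ bounds the prior mass only after you add the convexity tail factor, which you do supply later.
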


\begin{proof}
  Let $Z_N$ denote the normalizing constant of the posterior.
  
  \bigskip
  \noindent \emph{Light tails.} When $\pi_\alpha(\cdot)$ has bounded support, the result is immediate. Otherwise, fix $\epsilon > 0$ and $\delta = \epsilon / 2$. For all large $N$, $V(\alpha)$ is convex and (by convexity and the fact that $V(\alpha) / \alpha \to \infty$) increasing on $[\delta N, \infty)$. Define the secant slope 
  \begin{math}
    s_N = \frac{V(\epsilon N) - V(\delta N + 1)}{\epsilon N - \delta N - 1}.
  \end{math}
  Convexity and $V(\alpha) / \alpha \to \infty$ imply that $s_N \to \infty$.

  For $\alpha \ge \epsilon N$, convexity of $V(\alpha)$ gives
  \begin{math}
    V(\alpha) \ge V(\epsilon N) + s_N(\alpha - \epsilon N).
  \end{math}
  Hence, 
  \begin{align*}
    \int_{\epsilon N}^{\infty} L_N(\alpha) \, \pi_\alpha(d\alpha)
    \le \int_{\epsilon N}^\infty e^{-V(\alpha)} \ d\alpha
    \le \frac{e^{-V(\epsilon N)}}{s_N}.
  \end{align*}
  On the other hand, (eventual) monotonicity of $L_N(\alpha)$ and $V(\alpha)$ gives for sufficiently large $N$ that
  \begin{align*}
    Z_N \ge
    \int_{\delta N}^{\delta N + 1} L_N(\alpha) e^{-V(\alpha)} \ d\alpha
    \ge L_N(\delta N) e^{-V(\delta N + 1)}.
  \end{align*}
  By Lemma~\ref{lem:likelihood}, $-\log L_N(\delta N) = N I(\delta) + o(N)$. Combining the above two displays gives
  \begin{align*}
    \log \mu_N([\epsilon, \infty))
      \le -s_N (\epsilon N - \delta N - 1) + N I(\delta) + o(N) - \log s_N.
  \end{align*}
  Because $s_N \to \infty$, the first term dominates and so $\log \mu_N([\epsilon ,\infty)) \to -\infty$. 

  \bigskip
  \noindent \emph{Exponential Tails.} For $z > 0$, define $F_c(z) =I(z) + c \, z$. Because $I'(z) = \log(1 + z^{-1}) - z^{-1}$ and $I''(z) = z^{-2}(1 + z)^{-1} > 0$, $F_c(z)$ is strictly convex. We also note that $F'_c(z) \to -\infty$ as $z \to 0$ and $F'_c(z) \to c$ as $z \to \infty$; $F_c(z)$ therefore has a unique minimizer $t_c$ characterized by \eqref{eq:tc}. Fix a neighborhood $U$ of $t_c$. We aim to show
  \begin{math}
    \frac{\int_{\alpha / N \notin U} L_N(\alpha) \, \pi_\alpha(\alpha) \ d\alpha}{\int L_N(\alpha) \, \pi_\alpha(\alpha)} \to 0
  \end{math}
  as $N \to \infty$. For every compact interval $[r, R] \subseteq (0, \infty)$, Lemma~\ref{lem:likelihood} and $V(\alpha) / \alpha \to c$ imply that
  \begin{align}
    \label{eq:likelihood-bound}
    \sup_{t \in [r, R]} \left| \frac{1}{N} \log \{L_N(N t) \, \pi_\alpha(N t)\} + F_c(t) \right| \to 0.
  \end{align}
  Let $m = F_c(t_c)$ and choose $0 < r < t_c < R$ and $\eta > 0$ such that
  \[
    I(r) > m + 4\eta,
    \qquad \frac{cR}{2} > m + 4\eta,
    \qquad
    \inf_{z \in [r,R] \cap U^c} F_c(z) > m + 4\eta.
  \]
  Choose a closed interval $J = [\ell, u] \subseteq U \cap (r, R)$ containing $t_c$ in its interior and small enough that $\sup_{z \in J} F_c(z) < m + \eta$. By \eqref{eq:likelihood-bound}, for all large $N$, we have
  \begin{align*}
    Z_N \ge \int_{\alpha / N \in J} L_N(\alpha) \, \pi_\alpha(\alpha) \ d\alpha
    \ge N (u - \ell) e^{-N (m + 2\eta)}.
  \end{align*}
  To control the numerator $\int_{\alpha / N \in U^c} L_N(\alpha) \, \pi_\alpha(\alpha) \ d\alpha$, we bound it by
  \begin{align*}
    \int_{\alpha / N \le r} L_N(\alpha) \, \pi_\alpha(\alpha) \ d\alpha + 
    \int_{\alpha / N \ge R} L_N(\alpha) \, \pi_\alpha(\alpha) \ d\alpha + 
    \int_{\alpha / N  \in [r, R] \cap U^c} L_N(\alpha) \, \pi_\alpha(\alpha) \ d\alpha.
  \end{align*}
  The first term is bounded by $L_N(Nr) = e^{- N I(r) + o(N)} \le e^{-N(m + 3\eta)}$ for
  large $N$. For large $N$ as well, because $V(\alpha) / \alpha \to c$, we have
  $\pi(\alpha) \le e^{-c \alpha  / 2}$ for large $\alpha$, so the contribution
  of the second term is at most $C \, e^{-c N R / 2} \le C e^{-N(m + 4\eta)}$ for some constant $C$.
  Finally, by \eqref{eq:likelihood-bound}, the third term is bounded by $NR e^{-N\inf_{z \in [R, U] \cap U^c} F_c(z) + o(N)} \le N R
  e^{-N(m + 3 \eta)}$. Each of these terms converges to $0$ faster than
  $Z_N$, giving $\mu_N(U^c) \to 0$.

  \smallskip
  \noindent
  \emph{Heavy tails.} We will prove that, for some sequences $a_N$ and $b_N = o(N)$, we have
  \begin{align*}
    Z_N \ge \int_{a_N}^{a_N + 1} L_N(\alpha) \, \pi_\alpha(\alpha) \ d\alpha \ge
    e^{-b_N} = e^{-o(N)}.
  \end{align*}
  The result follows from this combined with the fact that, by monotonicity of $L_N(\cdot)$ and Lemma~\ref{lem:likelihood}, we have
  \begin{align*}
    \int_0^{M \, N} L_N(\alpha) \, \pi_\alpha(\alpha) \ d\alpha
    \le L_N(M N)
    = \exp\{-N I(M) + o(N)\}.
  \end{align*}
  The following argument uses the tail properties of $V(\alpha)$ to construct an appropriate $(a_N, b_N)$.

  Let $r(x) = \sup_{u \ge x} |V(u)| / u$ and note that $r(x) \to 0$ as $x \to \infty$. Choose integers $M(k) \uparrow \infty$ so that $r(x) \le k^{-2}$ for $x \ge M(k)$ and define the quantity $s_N = \max A_N$ where $A_N = \{k : 1 \le k \le \lfloor \sqrt N \rfloor : Nk \ge M(k)\}$ for $N$ large enough that this set is non-empty (define $s_N$ arbitrarily for the finitely many remaining values of $N$). For any fixed $k$, note that eventually we have $k \le \sqrt N$ and $Nk \ge M(k)$; consequently $s_N \ge k$ for sufficiently large $N$, i.e., $s_N \to \infty$. Moreover, because $s_N \in A_N$, we also have $r(N s_N) \le s_N^{-2}$ and $s_N r(N \, s_N) \le s_N^{-1} \to 0$. Take $a_N = N \, s_N$.

  Now, for $a \in [a_N, a_{N} + 1]$ we have
  \begin{math}
    V(a) \le a \frac{|V(a)|}{a} \le (a_{N} + 1) r(a_N) = o(N).
  \end{math}
  Additionally, by Lemma~\ref{lem:likelihood}, we have 
  \begin{math}
    - \log L_N(a) \le \frac{N (N - 1)}{2 a_N} \le \frac{N}{2 s_N} = o(N).
  \end{math}
  These bounds give $L_N(\alpha) \, e^{-V(\alpha)} \ge e^{-b_N} = e^{o(N)}$ where $b_N = (a_N + 1) r(a_N) + N / (2 \, s_N)$.
\end{proof}



We now prove Theorem~\ref{thm:main}. We will use the metric
\begin{align*}
  \rho(P, Q) = \sum_{k = 1}^\infty 2^{-k} |P f_k - Q f_k|,
\end{align*}
where $Pf = \int f \ dP$ and $\{f_k : k = 1,2,\ldots\}$ is a countable family of bounded continuous functions that determine weak convergence with $\|f_k\|_\infty \le 1$. Such a family exists for $\Reals^d$ (the Gaussian bump functions $\{e^{-\|y - \nu\|_2^2/(2\sigma^2)} : \sigma \in \mathbb Q_+, \nu \in \mathbb Q^d\}$ suffice), and the resulting metric metrizes weak convergence with $0 \le \rho \le 2$, as $\rho(P_n, Q) \to 0$ holds if-and-only-if $P_n f_k \to Q f_k$ for all $k$.

\begin{proof}
  Define $Q_{N,\alpha} = \frac{N \bbP_N + \alpha H}{N + \alpha}$. Conditional on $\alpha$, Tonelli's theorem Cauchy--Schwarz give
  \begin{align*}
    \E\{\rho(P, Q_{N, \alpha}) \mid \alpha, Y_{1:N}\}
    =
    \sum_k 2^{-k} \E\{|P f_k - Q_{N, \alpha} f_k| \mid \alpha, Y_{1:N}\}
    \le \sum_k 2^{-k} \sqrt{\Var(P f_k \mid \alpha, Y_{1:N})}.
  \end{align*}
  Properties of the Dirichlet process give $\Var(P f_k \mid \alpha, Y_{1:N}) = \frac{Q_{N,\alpha} f^2_k - (Q_{N,\alpha} f_k)^2}{\alpha + N + 1} \le \frac{1}{\alpha + N + 1}$, so that 
  \begin{math}
    \E\{\rho(P, Q_{N, \alpha}) \mid \alpha, Y_{1:N}\}
    \le
    \frac{1}{\sqrt{N + \alpha + 1}}.
  \end{math}
  By Markov's inequality and conditional expectation, we have
  \begin{align*}
    \Pr\{\rho(P, Q_{N,\alpha}) > \epsilon \mid Y_{1:N}\}
    \le
    \E\left\{\frac{1}{\epsilon \sqrt{N + \alpha + 1}} \mid Y_{1:N} \right\} \to 0
  \end{align*}
  as $N \to \infty$ by dominated convergence. It therefore suffices to show that $\Pr\{\rho(Q_{N,\alpha}, Q_t) > \epsilon \mid Y_{1:N}\} \to 0$ for all $\epsilon$. 

  Computing $\rho(Q_{N,\alpha}, Q_t)$ from the definition of $\rho(\cdot,\cdot)$ gives
  \begin{math}
    \rho(Q_{N,\alpha}, Q_t) \le w(t) \rho(\bbP_N, P_0) + 2 |w(\alpha / N) - w(t)|
    \text{ where } w(t) = (1 + t)^{-1}.
  \end{math}
  Consequently, we have
  \begin{align*}
    \Pr\{\rho(Q_{N,\alpha}, Q_t) > \epsilon \mid Y_{1:N}\}
    \le 1\{\rho(\bbP_N, P_0) > \epsilon/2\} + \Pr\{|w(\alpha/N) - w(t)| > \epsilon / 4 \mid Y_{1:N}\}.
  \end{align*}
  By Glivenko-Cantelli $\rho(\bbP_N, P_0) \to 0$ almost surely, while Theorem~\ref{thm:alpha} controls the second term because the posterior of $\alpha / N$ converges in distribution to $t$ with $P_0$-probability 1.
\end{proof}

\bibliographystyle{apalike}
\bibliography{mybib}

\end{document}